\documentclass{amsart}
\usepackage{latexsym,amssymb,amsmath,epsfig}
\usepackage{epsfig,graphicx,latexsym}
\usepackage{pstricks,pstricks-add,pst-math,pst-xkey}
\usepackage[latin1]{inputenc} 
\textwidth 15.5cm \textheight 22cm \oddsidemargin 0.1cm
\evensidemargin 0.1cm
\parindent=0pt
\setlength{\parskip}{1.5ex} \setlength{\parindent}{0em}
\setlength{\unitlength}{0.5cm} \pagenumbering{arabic}

\newtheorem{lemma}{Lemma}[section]
\newtheorem{theorem}{Theorem}[section]

\newtheorem{example}[theorem]{Example}

\begin{document}

\title[A new application of the $\otimes_h$-product to $\alpha$-labelings]{A new application of the $\otimes_h$-product to $\alpha$-labelings}

\author{S. C. L\'opez}
\address{%
Departament de Matem\`{a}tica Aplicada IV\\
Universitat Polit\`{e}cnica de Catalunya. BarcelonaTech\\
C/Esteve Terrades 5\\
08860 Castelldefels, Spain}
\email{susana@ma4.upc.edu}

\author{F. A. Muntaner-Batle}
\address{Graph Theory and Applications Research Group \\
 School of Electrical Engineering and Computer Science\\
Faculty of Engineering and Built Environment\\
The University of Newcastle\\
NSW 2308
Australia}
\email{famb1es@yahoo.es}
\date{\today}
\maketitle

\begin{abstract}
The weak tensor product was introduced by Snevily as a way to construct new graphs that admit $\alpha$-labelings from a pair of known $\alpha$-graphs. In this article, we show that this product and the application to $\alpha$-labelings can be generalized by considering as a second factor of the product, a family $\Gamma$ of bipartite $(p,q)$-graphs, $p$ and $q$ fixed. The only additional restriction that we should consider is that for every $F\in \Gamma$, there exists and $\alpha$-labeling $f_F$ with $f_F(V(F))=L\cup H$, where $L,H \subset [0,q]$ are the stable sets induced by the characteristic of $f_F$ and they do not depend on $F$. We also obtain analogous applications to near $\alpha$-labelings and bigraceful labelings.
\end{abstract}

\section{Introduction}
Let $G$ be a $(p,q)$-graph. A {\it $\beta$-labeling} of $G$ is an injective function $f:V(G)\rightarrow [0,q]$ such that the induced edge labeling $g:E(G)\rightarrow [1,q]$ defined by $g(uv)=|f(u)-f(v)|$ is also an injective function. This type of labeling, also known as {\it graceful} labeling \cite{Golomb}, was introduced by Rosa \cite{Rosa} in the context of graph decompositions. A $\beta$-labeling $f$ of $G$ is said to be an {\it $\alpha$-labeling} if there exists a constant $k$, called the {\it characteristic} of $f$, such that $\min\{f(u),f(v)\}\le k< \max\{f(u),f(v)\}$, for every edge $uv\in G$.
Clearly, a graph that admits an $\alpha$-labeling is bipartite. If $f:V(G)\rightarrow [0,q]$ is an $\alpha$-labeling of $G$, then $L=\{u: \ f(u)\le k\}$ and $H=\{u: \ f(u)>k\}$ defines a partition of $V(G)$ into two stable sets. El-Zanati et al. introduced in \cite{ElzKenEyd} the notion of near $\alpha$-labeling, which is in fact, a relaxation of an $\alpha$-labeling. A graceful labeling $f$ of $G$ is a {\it near $\alpha$-labeling} if there exists a partition $V(G)=A\cup B$ with the property that each edge of $G$ is of the form $uv$ with $u\in A$ and $v\in B$ and $f(u)<f(v)$. The introduction of this labeling is again motivated by graph decompositions.

\begin{theorem}
Let $G$ be a graph of size $n$. If $G$ admits a near $\alpha$-labeling then there exists a cyclic $G$-decomposition of $K_{2nx+1}$ for all positive integers $x$ and a cyclic $G$-decomposition of $K_{n,n}$.
\end{theorem}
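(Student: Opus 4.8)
The plan is to use Rosa's classical rotational technique: realize $G$ as one or several ``base'' copies inside a cyclic group (for $K_{2nx+1}$) or inside $\mathbb{Z}_n$ acting on a bipartite vertex set (for $K_{n,n}$), arranged so that each possible edge length is produced exactly once; the group translates of the base copies then partition all of $K_{2nx+1}$, resp. $K_{n,n}$, into copies of $G$. Throughout, fix a near $\alpha$-labeling $f\colon V(G)\to[0,n]$ with bipartition $A\cup B$, so that $f$ is graceful and every edge is of the form $uv$ with $u\in A$, $v\in B$, $f(u)<f(v)$. The single preliminary observation we need is that the unique edge whose label equals $n$ must join the vertex labeled $0$ to the vertex labeled $n$ (these are the only two labels at distance $n$ inside $[0,n]$); by the near $\alpha$-condition the smaller endpoint $0$ lies in $A$ and the larger endpoint $n$ lies in $B$. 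Consequently $f(A)\subseteq\{0,1,\dots,n-1\}$ and $f(B)\subseteq\{1,2,\dots,n\}$, while $\{f(v)-f(u): uv\in E(G)\}=\{1,2,\dots,n\}$ with every value attained exactly once.

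For $K_{2nx+1}$, set $m=2nx+1$ and work in $\mathbb{Z}_m$. For each $i\in\{0,1,\dots,x-1\}$ define an embedding $\varphi_i$ of $G$ into $\mathbb{Z}_m$ by $\varphi_i(u)=f(u)$ for $u\in A$ and $\varphi_i(v)=f(v)+ni$ for $v\in B$. Using the observation above, $\varphi_i$ is injective: an $A$-value is at most $n-1$ whereas a shifted $B$-value is at least $1+ni$, so they cannot coincide when $i\ge 1$, and for $i=0$ injectivity is just that of $f$; moreover all values lie in $\{0,1,\dots,nx\}\subseteq\mathbb{Z}_m$. The edge $\varphi_i(u)\varphi_i(v)$ has difference $(f(v)-f(u))+ni\ge 1$, and as $uv$ ranges over $E(G)$ and $i$ over $\{0,\dots,x-1\}$ these differences range over $\{d+ni: 1\le d\le n,\ 0\le i\le x-1\}=\{1,2,\dots,nx\}$, each exactly once; since $nx=(m-1)/2$ no value wraps around, so these are precisely the lengths of the edges. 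Now invoke the standard fact that for odd $m$ every edge of $K_m$ has a length in $\{1,\dots,(m-1)/2\}$, that there are exactly $m$ edges of each length, and that they form a single orbit under the translation action of $\mathbb{Z}_m$; hence $\{\varphi_i(G)+t : t\in\mathbb{Z}_m,\ 0\le i\le x-1\}$ is a cyclic $G$-decomposition of $K_{2nx+1}$.

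For $K_{n,n}$, take the two colour classes to be $\mathbb{Z}_n\times\{0\}$ and $\mathbb{Z}_n\times\{1\}$, with $\mathbb{Z}_n$ acting by translating the first coordinate in both classes simultaneously. Embed $G$ by sending $u\in A$ to $(f(u)\bmod n,\,0)$ and $v\in B$ to $(f(v)\bmod n,\,1)$. By the observation the reductions of $\{0,\dots,n-1\}$ and of $\{1,\dots,n\}$ modulo $n$ are both injective, so this is a genuine copy of $G$ inside $K_{n,n}$ (edges indeed run between the two classes). The ``difference'' $f(v)-f(u)\bmod n$ of the edge $uv$ runs over $\{1,\dots,n\}\bmod n=\mathbb{Z}_n$, each residue exactly once; since the $n$ edges of $K_{n,n}$ sharing a fixed difference form one $\mathbb{Z}_n$-orbit, the $n$ translates of this copy partition $E(K_{n,n})$, giving the claimed cyclic $G$-decomposition.

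The only point that genuinely requires care is verifying that the proposed maps are honest (injective) copies of $G$: a priori a shifted $B$-vertex in the $K_{2nx+1}$ construction, or a label reduced modulo $n$ in the $K_{n,n}$ construction, could clash with an $A$-vertex, and it is exactly the placement of the labels $0$ and $n$ forced by the near $\alpha$-condition that excludes this. Everything else — the computation of the two length multisets and the classical translate-orbit bookkeeping showing that hitting each length once yields a cyclic decomposition — is routine.
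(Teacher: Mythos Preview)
The paper does not prove this theorem at all: it is quoted as a background result from El-Zanati, Kenig and Vanden Eynden \cite{ElzKenEyd}, stated in the introduction to motivate near $\alpha$-labelings, and immediately followed by other citations rather than by any argument. So there is no ``paper's own proof'' to compare against.

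Your argument is correct and is essentially the standard one. The key preliminary observation --- that the near $\alpha$-condition forces the vertex labeled $0$ into $A$ and the vertex labeled $n$ into $B$, whence $f(A)\subseteq\{0,\dots,n-1\}$ and $f(B)\subseteq\{1,\dots,n\}$ --- is exactly what is needed to make the two constructions go through, and you identify it clearly. The stretching maps $\varphi_i$ for $K_{2nx+1}$ and the mod-$n$ embedding for $K_{n,n}$ are the classical constructions; your injectivity checks and difference computations are accurate, and the orbit bookkeeping is routine as you say. One small stylistic point: in the $K_{2nx+1}$ part you might note explicitly that injectivity of $\varphi_i$ restricted to $B$ alone is immediate (a constant shift of an injection), just to round out the case analysis, but this is understood.
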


A near $\alpha$-labeling can be thought as a particular case of a bigraceful labeling \cite{RinLlaSer} (see also \cite{LlaLop}), in the sense that every graph that admits a near $\alpha$-labeling also admits a bigraceful labeling. In fact, the study of cyclic decompositions of $K_{n,n}$ by a given tree of order $n$ starts at \cite{RinLlaSer}, where Ringel, Llad\'o and Serra introduced the concept of a bigraceful labeling of a graph.
Let $G$ be a bipartite graph of size $n$ and with stable sets $A$ and $B$. A pair of injective functions $f_A: A\rightarrow [0,n-1]$ and $f_B:B\rightarrow [0,n-1]$ is a {\it bigraceful labeling} \cite{LlaLop,RinLlaSer} of $G$ if the induced edge labeling on the edges $g: E(H)\rightarrow [0,n-1]$ defined by $g(uv)=f_B(v)-f_A(u)$ (with respect the ordered partition $(A,B)$) is also injective. Then, the graph $G$ is said to be a {\it bigraceful graph}.

Given two bipartite graphs $G$ and $F$ with stable sets $L_G$, $H_G$, $L_F$ and $H_F$, respectively, Snevily \cite{Snevily} defines the {\it weak tensor product} $G\bar\otimes F$ as the bipartite graph with
vertex set $(L_G\times L_F, H_G\times H_F)$ and with $(a,x)(b,y)$ being an edge if $ab\in E(G)$ and $xy\in E(F)$. Thus, it comes from the tensor product (also known as direct product) of two graphs by deleting some of its vertices and edges, according to the stable sets of the two graphs involved. However, in order to better understand the relations among the two products, that is, which vertices and edges we remove from the direct product, we orient the edges of the graphs involved and we consider the direct product of digraphs. Let $D$ and $H$ be two digraphs, the direct product of $D$ and $H$, is the digraph $D\otimes H$ (also denoted by $D\times H$) with $V(D\otimes H)=V(D)\times V(H)$ and with $((a,x),(b,y))\in E(D\otimes H)$ if and only if $(a,b)\in E(D)$ and $(x,y)\in E(H)$. Other names for the tensor product include Kronecker product, cardinal product, cross product or categorical product. A complete book on graph products is \cite{HamImrKlav11}.

\begin{lemma}\label{lemma: weak from direct product}
Let $G$ and $F$ be two bipartite graphs without isolated vertices. Let $L_G$, $H_G$, $L_F$ and $H_F$ be the stable sets of $G$ and $F$, respectively. Let $\overrightarrow{G}$ be the oriented graph obtained from $G$ by orienting every edge of $G$ from $L_G$ to $H_G$. Similarly, let $\overrightarrow{F}$ be the oriented graph obtained from $F$ by orienting every edge of $F$ from $L_F$ to $H_F$. Then, the graph $G\bar\otimes F$ is obtained from und($\overrightarrow{G}\otimes \overrightarrow{F}$) by removing all isolated vertices, where und$(D)$ denotes the underlying graph of any digraph $D$.
\end{lemma}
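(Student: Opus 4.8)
The plan is to compare the two graphs arc/edge by arc/edge and vertex by vertex, exploiting the fact that once each factor is oriented from its $L$-class to its $H$-class, every arc of the digraph direct product is forced to point from $L_G\times L_F$ to $H_G\times H_F$. First I would unwind the definition of $\overrightarrow{G}$: an ordered pair $(a,b)$ is an arc of $\overrightarrow{G}$ if and only if $ab\in E(G)$, $a\in L_G$ and $b\in H_G$; similarly $(x,y)\in E(\overrightarrow{F})$ iff $xy\in E(F)$, $x\in L_F$ and $y\in H_F$. Substituting this into the definition of the digraph direct product, $((a,x),(b,y))$ is an arc of $\overrightarrow{G}\otimes\overrightarrow{F}$ if and only if $ab\in E(G)$, $xy\in E(F)$, $(a,x)\in L_G\times L_F$ and $(b,y)\in H_G\times H_F$.

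Next I would read off the underlying graph. Since $L_G\cap H_G=\emptyset$ and $L_F\cap H_F=\emptyset$, no arc of $\overrightarrow{G}\otimes\overrightarrow{F}$ has its reverse present (and there are no loops, as $a=b$ is impossible), so $\mathrm{und}(\overrightarrow{G}\otimes\overrightarrow{F})$ is a simple graph whose edge set is exactly $\{\,(a,x)(b,y) : ab\in E(G),\ xy\in E(F),\ a\in L_G,\ b\in H_G,\ x\in L_F,\ y\in H_F\,\}$. This is literally the edge set of $G\bar\otimes F$, so the two graphs already have the same edges; it remains to match their vertex sets after removing isolated vertices.

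For the vertex set, note that $V(\mathrm{und}(\overrightarrow{G}\otimes\overrightarrow{F}))=V(G)\times V(F)=(L_G\times L_F)\cup(L_G\times H_F)\cup(H_G\times L_F)\cup(H_G\times H_F)$, whereas $V(G\bar\otimes F)=(L_G\times L_F)\cup(H_G\times H_F)$. By the edge description above, every edge has one endpoint in $L_G\times L_F$ and the other in $H_G\times H_F$, so every vertex of $(L_G\times H_F)\cup(H_G\times L_F)$ is isolated and is deleted. Conversely, I would invoke the hypothesis that $G$ and $F$ have no isolated vertices: given $(a,x)\in L_G\times L_F$, the vertex $a$ has a neighbour in $G$, necessarily in $H_G$, say $b$, and likewise $x$ has a neighbour $y\in H_F$; then $(a,x)(b,y)$ is an edge, so $(a,x)$ is not isolated. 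A symmetric argument handles $H_G\times H_F$. Hence the non-isolated vertices of $\mathrm{und}(\overrightarrow{G}\otimes\overrightarrow{F})$ are exactly $(L_G\times L_F)\cup(H_G\times H_F)=V(G\bar\otimes F)$, which completes the identification.

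The argument is essentially bookkeeping; the only place where care is needed is the vertex set, and that is precisely where the ``no isolated vertices'' assumption enters — without it some vertex of $L_G\times L_F$ or $H_G\times H_F$ could fail to lie on any edge of $\mathrm{und}(\overrightarrow{G}\otimes\overrightarrow{F})$ while still surviving as an isolated vertex of the weak tensor product, breaking the equality. So I expect that to be the only subtle point of the proof.
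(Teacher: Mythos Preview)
The paper does not actually include a proof of this lemma; it is stated without proof and followed immediately by the next result. Your argument is correct and is exactly the straightforward verification one would expect: match the edge sets directly from the definitions, then handle the vertex sets by showing that the ``mixed'' blocks $(L_G\times H_F)\cup(H_G\times L_F)$ consist entirely of isolated vertices while the no-isolated-vertices hypothesis guarantees that every vertex of $(L_G\times L_F)\cup(H_G\times H_F)$ is incident to some edge. Your closing remark about where the hypothesis is genuinely needed is also accurate.
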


Snevily proves the next result.

\begin{theorem}\cite{Snevily}\label{theo: Snevily}
Let $G$ and $F$ be two bipartite graphs that have $\alpha$-labelings, with stable sets $L_G$, $H_G$, $L_F$ and $H_F$, respectively. Then, the graph $G\bar\otimes F$ also has an $\alpha$-labeling.
\end{theorem}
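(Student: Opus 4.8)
The plan is to produce an explicit $\alpha$-labeling of $G\bar\otimes F$ directly from the given ones, combining the two coordinate labels in base $q_F$. Fix an $\alpha$-labeling $f$ of $G$ with characteristic $k$: thus $f\colon V(G)\to[0,q_G]$ is injective, $L_G=\{a:f(a)\le k\}$, $H_G=\{b:f(b)>k\}$, every edge of $G$ joins some $a\in L_G$ to some $b\in H_G$ and has label $f(b)-f(a)$, and these edge labels exhaust $[1,q_G]$. Fix an $\alpha$-labeling $g$ of $F$ with characteristic $k'$ and use the analogous notation ($L_F$, $H_F$, with edge labels filling $[1,q_F]$). Note $k\le q_G-1$ and $k'\le q_F-1$ since $H_G,H_F\neq\emptyset$. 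On $V(G\bar\otimes F)=(L_G\times L_F)\cup(H_G\times H_F)$ define $h(a,x)=q_F f(a)+g(x)$ for $(a,x)\in L_G\times L_F$ and $h(b,y)=q_F(f(b)-1)+g(y)$ for $(b,y)\in H_G\times H_F$. I claim $h$ is an $\alpha$-labeling of $G\bar\otimes F$ with characteristic $k''=q_F k+k'$, and the rest is verification.

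The first step is to locate the two stable sets. Using $0\le f(a)\le k$ and $0\le g(x)\le k'$, the values of $h$ on $L_G\times L_F$ lie in $[0,\,q_Fk+k']$; using $k+1\le f(b)\le q_G$ and $k'+1\le g(y)\le q_F$, the values of $h$ on $H_G\times H_F$ lie in $[q_Fk+k'+1,\,q_Gq_F]$. Hence $h$ maps into $[0,q]$, where $q=|E(G\bar\otimes F)|=q_Gq_F$; the two stable sets fall on opposite sides of $k''$; and since every edge of $G\bar\otimes F$ joins $L_G\times L_F$ to $H_G\times H_F$, the characteristic condition at $k''$ is automatic. Injectivity of $h$ I would check within each part by division with remainder by $q_F$: on $L_G\times L_F$ the remainder $g(x)<q_F$ recovers $g(x)$ and then $f(a)$, hence $x$ and $a$; on $H_G\times H_F$ the same works because $|g(y)-g(y')|<q_F$. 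As the two value ranges are disjoint intervals, $h$ is injective on the whole vertex set.

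It remains to compute the edge labels. By the definition of $\bar\otimes$ (or, conceptually, by Lemma~\ref{lemma: weak from direct product} applied to the orientations $\overrightarrow{G},\overrightarrow{F}$), every edge of $G\bar\otimes F$ has the form $(a,x)(b,y)$ with $ab\in E(G)$, $a\in L_G$, $b\in H_G$ and $xy\in E(F)$, $x\in L_F$, $y\in H_F$, and $(a,x)(b,y)\mapsto(ab,xy)$ is a bijection onto $E(G)\times E(F)$. For such an edge,
\[
h(b,y)-h(a,x)=q_F\bigl(f(b)-f(a)-1\bigr)+\bigl(g(y)-g(x)\bigr)=q_F(i-1)+j,
\]
where $i=f(b)-f(a)\in[1,q_G]$ and $j=g(y)-g(x)\in[1,q_F]$ are the corresponding edge labels of $f$ and $g$. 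Since $(i,j)\mapsto q_F(i-1)+j$ is a bijection $[1,q_G]\times[1,q_F]\to[1,q_Gq_F]$ and the edge labelings of $f$ and $g$ are bijections onto $[1,q_G]$ and $[1,q_F]$ respectively, the induced edge labeling of $G\bar\otimes F$ is a bijection onto $[1,q]$; together with the characteristic property noted above, $h$ is an $\alpha$-labeling.

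The only genuinely inventive step is writing down the formula for $h$; everything afterwards is routine base-$q_F$ bookkeeping. The point that needs a little care — and the reason for the ``$-1$'' shift in the definition of $h$ on $H_G\times H_F$ — is making the $h$-images of the two stable sets disjoint intervals meeting exactly at $k''$, which is precisely what turns a merely graceful labeling into an $\alpha$-labeling.
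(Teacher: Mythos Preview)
Your proof is correct and uses exactly the same labeling formula as the paper's proof of the generalization (Theorem~\ref{theo: Snevily_generalize}): identifying vertices with their labels, the paper's $f(a,x)=na+x$ and $f(b,y)=n(b-1)+y$ with $n=q_F$ are precisely your $h$. Your verification is slightly more detailed (you make the characteristic $k''=q_Fk+k'$ and the base-$q_F$ bijection explicit), but the argument is essentially identical.
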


Using a similar proof, Theorem \ref{theo: Snevily} was extended to near $\alpha$-labelings in \cite{ElzKenEyd}.

\begin{theorem}\cite{ElzKenEyd}\label{theo: near_alpha_labelings}
If the graphs $G$ and $F$ have near $\alpha$-labelings, then so does the graph $G\bar\otimes F$ with respect to the induced vertex partitions.
\end{theorem}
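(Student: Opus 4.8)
The plan is to transcribe Snevily's proof of Theorem~\ref{theo: Snevily}, using the ordered vertex partition of a near $\alpha$-labeling wherever he uses the stable sets together with the characteristic. Put $s=|E(G)|$ and $t=|E(F)|$, let $f$ (with ordered partition $(A_G,B_G)$) be a near $\alpha$-labeling of $G$ and $g$ (with ordered partition $(A_F,B_F)$) a near $\alpha$-labeling of $F$, so that every edge of $G$ is of the form $ab$ with $a\in A_G$, $b\in B_G$, $f(a)<f(b)$, and likewise for $F$; as throughout this theory I assume $G$ and $F$ have no isolated vertices (as in Lemma~\ref{lemma: weak from direct product}). Directly from the definition of $\bar\otimes$, the graph $G\bar\otimes F$ has vertex set $(A_G\times A_F)\cup(B_G\times B_F)$, edge set $\{(a,x)(b,y):ab\in E(G),\ xy\in E(F)\}$, hence exactly $st$ edges, and the map $(a,x)(b,y)\mapsto(ab,xy)$ is a bijection onto $E(G)\times E(F)$ (once $a\in A_G$, $b\in B_G$ is fixed, the endpoints of each edge are forced into the two sides); the induced ordered partition of $G\bar\otimes F$ is $(A_G\times A_F,\,B_G\times B_F)$.

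Next I would define $h\colon V(G\bar\otimes F)\to[0,st]$ by
\[
h(a,x)=t\,f(a)+g(x)\ \ \text{for }(a,x)\in A_G\times A_F,\qquad
h(b,y)=t\,\bigl(f(b)-1\bigr)+g(y)\ \ \text{for }(b,y)\in B_G\times B_F.
\]
For an edge $(a,x)(b,y)$, writing $e_G=f(b)-f(a)$ and $e_F=g(y)-g(x)$ for the corresponding $G$- and $F$-edge labels, a direct computation gives $h(b,y)-h(a,x)=t(e_G-1)+e_F$. Since $f$ is graceful, $e_G$ ranges bijectively over $\{1,\ldots,s\}$ as the edge of $G$ varies, and similarly $e_F$ over $\{1,\ldots,t\}$; combining this with the edge bijection of the previous paragraph and the obvious bijection $(i,j)\mapsto t(i-1)+j$ from $\{1,\ldots,s\}\times\{1,\ldots,t\}$ onto $\{1,\ldots,st\}$, the induced edge labeling of $G\bar\otimes F$ is a bijection onto $\{1,\ldots,st\}$. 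Also $h(b,y)-h(a,x)=t(e_G-1)+e_F\ge 1>0$ on every edge, so $h$ is strictly smaller on the $A_G\times A_F$ side; thus $h$ is compatible with the induced ordered partition, and once we know $h$ is injective we are done. (Since every vertex has an edge, $f(A_G)\subseteq[0,s-1]$, $f(B_G)\subseteq[1,s]$, $g(A_F)\subseteq[0,t-1]$, $g(B_F)\subseteq[1,t]$, so $h$ does land in $[0,st]$.)

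The remaining task, injectivity of $h$, is the only place that calls for a new idea, since a near $\alpha$-labeling has no characteristic to separate the two sides the way Snevily's argument does; instead I would exploit that $f$ and $g$ are injective on the \emph{whole} vertex sets. If two vertices of $A_G\times A_F$ receive the same label, then $t(f(a)-f(a'))=g(x')-g(x)$, whose right-hand side has absolute value at most $t-1<t$ because $g(x),g(x')\in[0,t-1]$; hence it is $0$, forcing $f(a)=f(a')$ and then $g(x)=g(x')$, so $(a,x)=(a',x')$. The argument for two vertices of $B_G\times B_F$ is identical, using $g(y),g(y')\in[1,t]$. Finally, if $h(a,x)=h(b,y)$ with $(a,x)\in A_G\times A_F$ and $(b,y)\in B_G\times B_F$, then reducing $t\,f(a)+g(x)=t\,(f(b)-1)+g(y)$ modulo $t$ gives $g(x)\equiv g(y)\pmod t$; since $g(x)\in[0,t-1]$ and $g(y)\in[1,t]$, either $g(x)=g(y)$, which is impossible because $g$ is injective on $V(F)$ and $A_F\cap B_F=\varnothing$, or $g(x)=0$ and $g(y)=t$, which substituted back yields $f(a)=f(b)$, impossible because $f$ is injective on $V(G)$ and $A_G\cap B_G=\varnothing$. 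Hence $h$ is injective, and therefore a near $\alpha$-labeling of $G\bar\otimes F$ with respect to the induced vertex partition. The whole difficulty is concentrated in this last cross-side check; everything else is Snevily's computation carried over.
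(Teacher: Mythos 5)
Your proposal is correct and follows essentially the same route as the paper: the paper's proof of the generalized Theorem \ref{theo_near_alpha labeling_generalize} uses exactly this labeling ($na+x$ on the $A$-side, $n(b-1)+y$ on the $B$-side) and resolves the cross-side injectivity by the same mod-$n$ argument forcing $g(x)=0$, $g(y)=n$ and hence $a=b$, a contradiction. The remaining checks you spell out (the edge-label bijection onto $[1,st]$ and within-side injectivity) are the parts the paper defers to the proof of Theorem \ref{theo: Snevily_generalize}, and they match.
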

Figueroa-Centeno et al. introduced in \cite{F1} a generalization of the direct product of digraphs, which has been used as a powerful tool in order to create different types of labelings for different types of graphs, see for instance \cite{ILMR,LopMunRiu1,LopMunRiu8}.
Let $D$ be a digraph and let $\Gamma $ be a family of digraphs such that $V(F)=V$, for every $F\in \Gamma$. Consider any function $h:E(D)\longrightarrow\Gamma $.
Then the product $D\otimes_{h} \Gamma$ is the digraph with vertex set being the cartesian product $V(D)\times V$ and $((a,x),(b,y))\in E(D\otimes_{h
    }\Gamma)$ \index{$\otimes_h$-product} if and only if $(a,b)\in E(D)$ and $(x,y)\in
    E(h ((a,b)))$.

In this article, we show that the weak tensor product and its application to $\alpha$-labelings and near $\alpha$-labelings can be generalized by considering as a second factor of the product, a family $\Gamma$ of bipartite $(p,q)$-graphs, with $p$ and $q$ fixed. The only additional restriction that we should consider is that for every $F\in \Gamma$, there exists and $\alpha$-labeling $f_F$ with $f_F(V(F))=L\cup H$, where $L,H\subset [0,q]$ are the stable sets defined by the characteristic of $f_F$ and they do not depend on $F$ (Theorem \ref{theo: Snevily_generalize}). We also obtain analogous applications to near $\alpha$-labelings (Theorem \ref{theo_near_alpha labeling_generalize}) and bigraceful labelings  (Theorem \ref{theo_bigraceful_generalize}).

\section{The weak $\otimes_h$-product of (di)graphs}

Inspirated by Lemma \ref{lemma: weak from direct product}, and by the definition of the $\otimes_h$-product, we introduce the weak $\otimes_h$-product of graphs.

Let $G$ be a bipartite graph with stable sets $L_G$ and $H_G$ and let $\Gamma $ be a family of bipartite graphs such that $V(F)=L\cup H$, for every $F\in \Gamma$. Consider any function $h: E(G)\rightarrow \Gamma$. Then, the product
$G\bar\otimes_h\Gamma$ is the graph with vertex set $(L_G\times L, H_G\times H)$ and $(a,x),(b,y)\in E(G\bar\otimes_h\Gamma)$ if and only if $ab\in E(G)$ and $xy\in E(h(ab))$.

\begin{example}\label{example_1}
Figure \ref{Fig_1} shows all elements $F$ in the family $\Gamma$, with $V(F)=L\cup H$ and $|E(F)|=6$, when $L=\{0,1,2\}$ and $H=\{3,4,6\}$.
\begin{figure}[h]
\begin{center}
  \includegraphics[width=46pt]{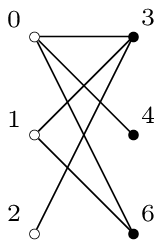}\hspace{1cm}\includegraphics[width=46pt]{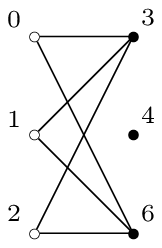}\hspace{1cm}\includegraphics[width=46pt]{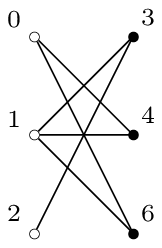}\hspace{1cm}\includegraphics[width=46pt]{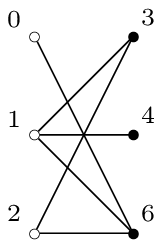}\\
  \includegraphics[width=46pt]{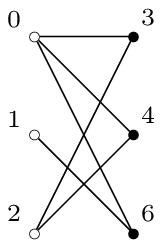}\hspace{1cm}\includegraphics[width=46pt]{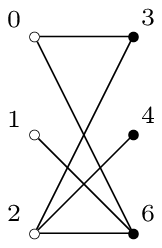}\hspace{1cm}\includegraphics[width=46pt]{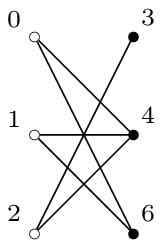}\hspace{1cm}\includegraphics[width=46pt]{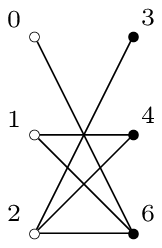}\\
\caption{All elements $F$ in $\Gamma$ with $V(F)=L\cup H$ and $|E(F)|=6$, when $L=\{0,1,2\}$ and $H=\{3,4,6\}$.}
  \label{Fig_1}
  \end{center}
\end{figure}

Let $P$ be the path defined by $V(P)=\{0,1,2\}$ and $E(P)=\{01,02\}$. Consider the function $h: E(P)\rightarrow \Gamma$ that assigns the first graph and the last graph in Fig. \ref{Fig_1} to the edges $02$ and $01$, respectively. Then, the product
$P\bar\otimes_h\Gamma$ is the graph that appears in Fig. \ref{Fig_2}, when $L_P=\{0\}$ and $H_p=\{1,2\}$.

\begin{figure}[h]
\begin{center}
  \includegraphics[width=211pt]{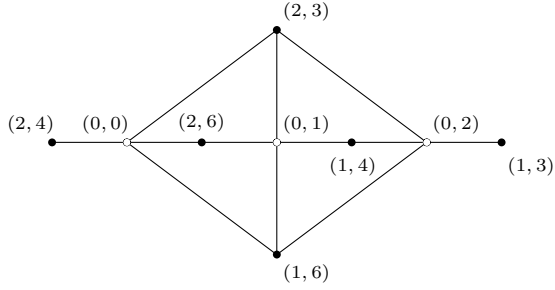}\label{Fig_2}
  \end{center}
  \caption{A graph obtained using the $\protect\bar\otimes_h$-product}
\end{figure}
\end{example}
The relation between the $\otimes_h$-product and the $\bar\otimes_h$-product is stated in the next result.
\begin{lemma}
Let $G$ be a bipartite graph with estable sets $L_G$ and $H_G$ and let $\Gamma $ be a family of bipartite graphs such that $V(F)=L\cup H$, for every $F\in \Gamma$. Assume that $G$ and every $F\in\Gamma$ do not contain isolated vertices and
let $\overrightarrow{G}$ be the oriented graph obtained from $G$ by orienting every edge of $G$ from $L_G$ to $H_G$. Similarly, for every $F\in \Gamma$, let $\overrightarrow{F}$ be the oriented graph obtained from $F$ by orienting every edge of $F$ from $L_F$ to $H_F$ and $ \overrightarrow{\Gamma}=\{\overrightarrow{F}\}_{F\in \Gamma}$. Consider any function $h:E(G)\longrightarrow\Gamma $.
Then the product
$G\bar\otimes_h\Gamma$ is the graph obtained from und($\overrightarrow{G}\otimes_{h^*}\overrightarrow{\Gamma}$), where $h^*$ is defined by $h^*((a,b))=\overrightarrow{F}$ whenever $h(ab)=F$, by removing all isolated vertices.
\end{lemma}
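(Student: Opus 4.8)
The plan is to prove the identity by comparing vertex sets and edge sets directly, exactly as one does for Lemma~\ref{lemma: weak from direct product}. Write $V=L\cup H$ (so, as throughout, $L$ and $H$ are the common stable sets $L_F$, $H_F$ of all members of $\Gamma$), and set $D:=\overrightarrow{G}\otimes_{h^*}\overrightarrow{\Gamma}$. Then $V(D)=V(G)\times V$, whereas $V(G\bar\otimes_h\Gamma)=(L_G\times L)\cup(H_G\times H)\subseteq V(G)\times V$. First I would observe that $h^*$ is well defined: since $\overrightarrow{G}$ carries exactly one arc $(a,b)$ for each edge $ab$ of $G$, the rule $h^*((a,b))=\overrightarrow{F}$ with $F=h(ab)$ is unambiguous, and because $L\to H$ is the orientation used on every $\overrightarrow{F}$, all the "second coordinates'' of arcs of $D$ behave uniformly.

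Next I would identify the isolated vertices of $\mathrm{und}(D)$. By the definitions of $\otimes_{h^*}$ and of $\overrightarrow{G}$, every arc of $D$ has the form $((a,x),(b,y))$ with $(a,b)\in E(\overrightarrow{G})$ (hence $a\in L_G$, $b\in H_G$) and $(x,y)\in E(\overrightarrow{F})$ for $F=h(ab)$ (hence $x\in L$, $y\in H$); consequently every non-isolated vertex of $\mathrm{und}(D)$ lies in $(L_G\times L)\cup(H_G\times H)$, so all vertices of $(L_G\times H)\cup(H_G\times L)$ are isolated. Conversely, using the hypothesis that neither $G$ nor any $F\in\Gamma$ has isolated vertices: given $(a,x)\in L_G\times L$, choose a neighbour $b\in H_G$ of $a$ in $G$ and a neighbour $y$ of $x$ in $F=h(ab)$; since $L$ is stable in $F$ we must have $y\in H$, so $((a,x),(b,y))\in E(D)$ and $(a,x)$ is not isolated, and symmetrically for $H_G\times H$. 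Hence the isolated vertices of $\mathrm{und}(D)$ are precisely those of $(L_G\times H)\cup(H_G\times L)$; deleting them leaves a graph on vertex set $(L_G\times L)\cup(H_G\times H)=V(G\bar\otimes_h\Gamma)$ (and, incidentally, shows $G\bar\otimes_h\Gamma$ has no isolated vertices of its own).

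It then remains to match the edges on this common vertex set. An edge of $\mathrm{und}(D)$ between two surviving vertices arises from an arc $((a,x),(b,y))$ with $a\in L_G$, $b\in H_G$, $ab\in E(G)$ and $x\in L$, $y\in H$, $xy\in E(h(ab))$, which is exactly the defining condition for $(a,x)(b,y)\in E(G\bar\otimes_h\Gamma)$; conversely, for an edge $(a,x)(b,y)$ of $G\bar\otimes_h\Gamma$ exactly one of $a,b$ lies in $L_G$, say $a\in L_G$ and $b\in H_G$, whence membership in the vertex set forces $x\in L$, $y\in H$, so $(a,b)\in E(\overrightarrow{G})$ and $(x,y)\in E(h^*((a,b)))$, i.e.\ $(a,x)(b,y)\in E(\mathrm{und}(D))$. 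I expect the only delicate point — the part that is more than symbol pushing — to be the orientation bookkeeping: one must track simultaneously that each edge $ab$ of $G$ is oriented $L_G\to H_G$ and that the fixed, $F$-independent bipartition $(L,H)$ of each second factor is oriented $L\to H$, so that the two coordinates of a product edge are always compatible; this compatibility is precisely what ensures no edge is gained or lost when passing from $D$ to $\mathrm{und}(D)$ and restricting to $(L_G\times L)\cup(H_G\times H)$.
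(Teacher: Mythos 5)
Your verification is correct: the identification of the arcs of $\overrightarrow{G}\otimes_{h^*}\overrightarrow{\Gamma}$, the determination that the isolated vertices of the underlying graph are exactly those of $(L_G\times H)\cup(H_G\times L)$ (using, as you rightly point out, the no-isolated-vertices hypothesis on both $G$ and every $F\in\Gamma$ to show that every vertex of $(L_G\times L)\cup(H_G\times H)$ survives), and the edge-by-edge matching are all sound. The paper states this lemma without proof, treating it as immediate from the definitions, and your direct comparison of vertex and edge sets is precisely the routine argument the authors intend.
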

\subsection{Application to (near) $\alpha$-labelings}
As happens with the $\otimes_h$-product, the product $\bar\otimes_h$ that we have introduced can be used to obtain new families of labeled graphs from existing ones.

The next result generalizes Theorem \ref{theo: Snevily}.

\begin{theorem}\label{theo: Snevily_generalize}
Let $G$ be a bipartite graph that has an $\alpha$-labeling. Let $\Gamma$ be a family of bipartite graphs such that for every $F\in \Gamma$, $|E(F)|=n$ and there exists and $\alpha$-labeling $f_F$ with $f_F(V(F))=L\cup H$, where $L, H\subset [0,n]$ are the stable sets defined by the characteristic of $f_F$ and they do not depend on $F$. Consider any function $h: E(G)\rightarrow \Gamma$. Then, the graph $G\bar\otimes_h \Gamma$ also has an $\alpha$-labeling.
\end{theorem}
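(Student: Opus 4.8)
The plan is to follow the proof of Theorem~\ref{theo: Snevily} almost verbatim; the one new feature is that the second factor of the product is a family instead of a single graph, but since by hypothesis every $F\in\Gamma$ carries an $\alpha$-labeling with the \emph{same} value sets $L$ and $H$, each graph $h(ab)$ contributes exactly one complete block of $n$ consecutive edge labels, just as a single fixed second factor would. Concretely, fix an $\alpha$-labeling $\phi$ of $G$ with characteristic $k_G$, put $q=|E(G)|$, and set $L_G=\{v:\phi(v)\le k_G\}$, $H_G=\{v:\phi(v)>k_G\}$; thus $\phi:V(G)\to[0,q]$, $\phi(L_G)\subseteq[0,k_G]$, $\phi(H_G)\subseteq[k_G+1,q]$, $\{\,|\phi(u)-\phi(v)|:uv\in E(G)\,\}=[1,q]$, and $k_G\le q-1$ (as $H_G\neq\emptyset$). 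By hypothesis each $F\in\Gamma$ has vertex set $L\cup H\subseteq[0,n]$, and (cf.\ Example~\ref{example_1}) the natural inclusion $L\cup H\hookrightarrow[0,n]$ is an $\alpha$-labeling of $F$ with value sets $L$ and $H$; hence $\{\,|x-y|:xy\in E(F)\,\}=[1,n]$ for every $F\in\Gamma$, and $\max L<\min H$ (the value sets being the low and high parts determined by a characteristic), which together with $L,H\subseteq[0,n]$ forces $L\subseteq[0,n-1]$ and $H\subseteq[1,n]$. (If $q=0$ or $n=0$ the product has no edges and there is nothing to prove.)

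Next I would define $\lambda$ on $V(G\bar\otimes_h\Gamma)=(L_G\times L)\cup(H_G\times H)$ by
\[
\lambda(a,x)=n\,\phi(a)+x\ \text{ for }(a,x)\in L_G\times L,\qquad
\lambda(b,y)=n\,(\phi(b)-1)+y\ \text{ for }(b,y)\in H_G\times H,
\]
and check the two easy facts about it. First, $\lambda$ maps into $[0,nq]$: nonnegativity is immediate, on the first part $\lambda\le nk_G+(n-1)\le nq$ (using $k_G\le q-1$), and on the second $\lambda\le n(q-1)+n=nq$. Second, $\lambda$ is injective: within $L_G\times L$ the value $n\phi(a)+x$ with $0\le x\le n-1$ determines $\phi(a)$ and $x$, hence $(a,x)$, and similarly within $H_G\times H$ using $1\le y\le n$; moreover on the first part $\lambda\le nk_G+\max L$ while on the second $\lambda\ge nk_G+\min H>nk_G+\max L$, so the two parts lie in disjoint ranges separated by $k^{\ast}:=nk_G+\max L$.

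The main step is the edge labeling. Each edge of $G\bar\otimes_h\Gamma$ joins a vertex $(a,x)\in L_G\times L$ to a vertex $(b,y)\in H_G\times H$ with $ab\in E(G)$ and $xy\in E(h(ab))$ (where $a\in L_G$, $b\in H_G$, $x\in L$, $y\in H$ are then forced), so these edges are in bijection with the pairs $(e,e')$, $e\in E(G)$, $e'\in E(h(e))$, of which there are $\sum_{e\in E(G)}|E(h(e))|=nq$ (as $|E(F)|=n$ for all $F\in\Gamma$). The induced label of such an edge is
\[
\lambda(b,y)-\lambda(a,x)=n\bigl(\phi(b)-\phi(a)-1\bigr)+(y-x)=n(i-1)+j,
\]
where $i:=\phi(b)-\phi(a)=|\phi(b)-\phi(a)|\in[1,q]$ is the label of $e$ in $G$ and $j:=y-x=|x-y|\in[1,n]$ is the label of $e'$ in $h(e)$. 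As $e$ runs over $E(G)$, $i$ runs bijectively over $[1,q]$; for each fixed $e$, as $e'$ runs over $E(h(e))$, $j$ runs bijectively over $[1,n]$; hence $(e,e')\mapsto(i,j)$ is a bijection onto $[1,q]\times[1,n]$, and composing with the bijection $(i,j)\mapsto n(i-1)+j$ onto $[1,nq]$ shows that the induced edge labels of $G\bar\otimes_h\Gamma$ are exactly $1,\dots,nq$, each once. Thus $\lambda$ is a $\beta$-labeling, and by the previous paragraph it is in fact an $\alpha$-labeling with characteristic $k^{\ast}$, since on every edge $\lambda(a,x)\le nk_G+\max L=k^{\ast}<nk_G+\min H\le\lambda(b,y)$.

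I expect the one genuinely delicate point to be the separation between the two halves of the range of $\lambda$, used both in the injectivity step and in the final inequality: it rests only on $\phi(L_G)\subseteq[0,k_G]$, $\phi(H_G)\subseteq[k_G+1,q]$ and $\max L<\min H$, and it is precisely the hypothesis that $L$ and $H$ are common to all members of $\Gamma$ that keeps the $|E(G)|$ blocks of $n$ edge labels from overlapping; nothing further about the individual graphs in $\Gamma$ is required.
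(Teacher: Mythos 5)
Your proof is correct and takes essentially the same approach as the paper: the vertex labeling $\lambda(a,x)=n\phi(a)+x$, $\lambda(b,y)=n(\phi(b)-1)+y$ is exactly the one used there, with the same range and separation checks. The only immaterial difference is that you establish distinctness of the induced edge labels by exhibiting the bijection $(i,j)\mapsto n(i-1)+j$ from $[1,q]\times[1,n]$ onto $[1,nq]$, whereas the paper argues by cancellation modulo $n$ from an assumed equality of two edge labels.
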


\begin{proof}
Let $q=|E(G)|$. Assume that every vertex of $G$ and every vertex of $F\in \Gamma$ takes the name of its label by an $\alpha$-labeling of $G$ and $F$, respectively. Consider $f:(L_G\times L, H_G\times H)\rightarrow [0, qn]$, defined by:
\begin{eqnarray*}
  f(a,x) &=& na+x, \ (a,x)\in L_G\times L,\\
  f(b,y) &=& n(b-1)+y,\ (b,y)\in H_G\times H.
\end{eqnarray*}
In what follows, we will prove that $f$ is an $\alpha$-labeling of $G\bar\otimes_h F$. Clearly, $n(b-1)+n\le n(q-1)+n\le nq$, for every $(b,y)\in H_G\times H$. Moreover, $na+x\le nk_G+k$ and $ n(b-1)+y>nk_G+k$, for every $(b,y)\in H_G\times H$ and $(a,x)\in L_G\times L$. Let us see now that $f$ is an injective function. If there exist two pairs $(b,y), (b',y')\in H_G\times H$ such that $f(b,y)=f(b',y')$, then $n(b-1)+y=n(b'-1)+y'$. That is, $y-y'\equiv 0$ (mod $n$), which implies, since $y,y'\in [1,n]$ (by definition, $y,y'\ne 0$) that $y=y'$ and thus, $b=b'$. Suppose now that there exist $(a,x), (a',x')\in L_G\times L$ such that  $f(a,x)= f(a',x')$. A similar reasoning implies that $x=x'$ and $a=a'$. Finally, we will see that the induced edge labels are distinct. Let $(a,x)(b,y), (a',x')(b',y')$ be two edges of $G\bar\otimes_h \Gamma$ such that $f(b,y)-f(a,x)=f(b',y')-f(a',x')$. That is, $n(b-1)+y-na-x=n(b'-1)+y'-na'-x'$, which is equivalent to
\begin{eqnarray}\label{eq: equality}
 n(b-a-b'+a')&=&y'-x'-y+x.
\end{eqnarray}
Since by hypothesis, $y'-x'$ and $y-x$ are elements in $[1,n]$, we necessarily obtain that $y'-x'=y-x$. Hence, using (\ref{eq: equality}), we conclude that $b-a-b'+a'=0$, that is, $b-a=b'-a'$. Therefore, since the vertices of $G$ correspond to the labels of an $\alpha$-labeling, we obtain that $ab=a'b'$, and in particular, that $xy,x'y'\in E(h(ab))$, which implies that $xy=x'y'$.
\end{proof}

Notice that Theorem \ref{theo: Snevily_generalize} significantly enlarges the class of bipartite graphs known to admit $\alpha$-labelings. This is done using an slightly modification of the product $\otimes_h$ defined by Figueroa-Centeno et al. in \cite{F1} and that proves once again to be of great help in the world of graph labelings.

Figure \ref{Fig_3} shows an $\alpha$-labeling of the graph obtained in Example \ref{example_1}.
\begin{figure}[h]
\begin{center}
  \includegraphics[width=205pt]{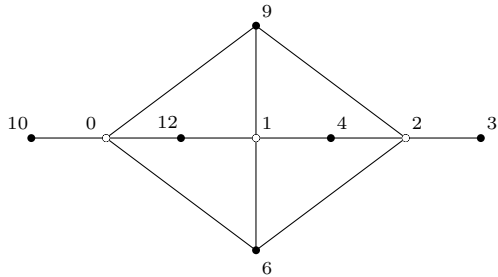}\label{Fig_3}
  \caption{An $\alpha$-labeling of the graph obtained using the $\protect\bar\otimes_h$-product}
  \end{center}
\end{figure}

With a slight modification of the previous proof, we also get the next theorem.

\begin{theorem}\label{theo_near_alpha labeling_generalize}
Let $G$ be a bipartite graph that admits a near $\alpha$-labeling. Let $\Gamma$ be a family of bipartite graphs such that for every $F\in \Gamma$, $|E(F)|=n$ and there exists and near $\alpha$-labeling $f_F$ with $f_F(V(F))=A\cup B$, where $A,B\subset [0,n]$ are the stable sets defined by $f_F$ and they do not depend on $F$. Consider any function $h: E(G)\rightarrow \Gamma$. Then, the graph $G\bar\otimes_h \Gamma$ also has a  near $\alpha$-labeling.
\end{theorem}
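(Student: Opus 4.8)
The plan is to mirror the proof of Theorem~\ref{theo: Snevily_generalize} almost line for line, simply replacing the stable sets defined by a characteristic with the partition classes furnished by a near $\alpha$-labeling. Write $q=|E(G)|$, let $V(G)=A_G\cup B_G$ be the partition associated with a near $\alpha$-labeling of $G$, and for each $F\in\Gamma$ let $A,B$ be the (label) classes of $f_F$; these are the stable sets used to form $G\bar\otimes_h\Gamma$, whose vertex set is therefore $(A_G\times A,\,B_G\times B)$. Identifying every vertex with its label, I would define $f(a,x)=na+x$ for $(a,x)\in A_G\times A$ and $f(b,y)=n(b-1)+y$ for $(b,y)\in B_G\times B$, and claim that $f$ is a near $\alpha$-labeling of the product with respect to the ordered partition $(A_G\times A,\,B_G\times B)$.

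The verification splits into three pieces. First, \emph{ranges}: since no graph involved has isolated vertices, the vertex of $G$ labeled $0$ must lie in $A_G$ and the one labeled $q$ in $B_G$, so $A_G\subseteq[0,q-1]$ and $B_G\subseteq[1,q]$; likewise $A\subseteq[0,n-1]$ and $B\subseteq[1,n]$ for every $F\in\Gamma$, whence $f$ takes values in $[0,qn]$. Second, \emph{injectivity}: inside $A_G\times A$ and inside $B_G\times B$ this is the reduction mod $n$ carried out in Theorem~\ref{theo: Snevily_generalize} verbatim, so the only genuinely new point is injectivity between the two classes. From $na+x=n(b-1)+y$ one gets $n(a-b+1)=y-x$; since $x\in[0,n-1]$ and $y\in[1,n]$, the right-hand side lies in $[2-n,n]$, hence equals $0$ or $n$. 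The value $0$ forces $x=y$, impossible because $A\cap B=\emptyset$; the value $n$ forces $x=0$, $y=n$ and $a=b$, impossible because $A_G\cap B_G=\emptyset$. Third, \emph{order and edge labels}: for an edge $(a,x)(b,y)$ of the product we have $ab\in E(G)$ with $a<b$ and $xy\in E(h(ab))$ with $x<y$, so $f(b,y)-f(a,x)=n(b-a-1)+(y-x)\ge 1>0$; and if $f(b,y)-f(a,x)=f(b',y')-f(a',x')$, then $n(b-a-b'+a')=(y'-x')-(y-x)$ with $y-x,\,y'-x'\in[1,n]$ (since $f_F$ is graceful and each edge $xy$ has $x\in A$, $y\in B$, $x<y$), which forces $y-x=y'-x'$ and $b-a=b'-a'$. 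Gracefulness of the labeling of $G$ then gives $ab=a'b'$, hence $h(ab)=h(a'b')$, and gracefulness of $f_F$ gives $xy=x'y'$, so the two edges of the product coincide.

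The main obstacle, and in fact the only real departure from the proof of Theorem~\ref{theo: Snevily_generalize}, is the between-classes injectivity: in the $\alpha$-case the two classes occupy disjoint intervals thanks to the characteristic, whereas a near $\alpha$-labeling provides no such threshold, so one has to argue directly from $A\cap B=\emptyset$ and $A_G\cap B_G=\emptyset$. Everything else is a transcription of the previous argument with $L_G,H_G,L,H$ replaced by $A_G,B_G,A,B$.
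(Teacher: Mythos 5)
Your proof is correct and follows essentially the same route as the paper's: the same labeling $f(a,x)=na+x$, $f(b,y)=n(b-1)+y$, with the only genuinely new step being the between-classes injectivity argument via divisibility by $n$, which is exactly how the paper handles it. Your treatment is in fact slightly more explicit (you separate the cases $y-x=0$ and $y-x=n$ and verify the order condition $f(a,x)<f(b,y)$ on edges, which the paper leaves implicit), but the substance is identical.
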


\begin{proof}
Let $q=|E(G)|$. Assume that every vertex of $G$ and every vertex of $F\in \Gamma$ takes the name of its label assigned by a near $\alpha$-labeling of $G$ and $F$, respectively. Suppose that every edge $xy\in E(F)$, is of the form $x\in A$, $y\in B$ and $x<y$. Consider $f:(A_G\times A, B_G\times B)\rightarrow [0, qn]$, defined by:
\begin{eqnarray*}
  f(a,x) &=& na+x, \ (a,x)\in A_G\times A,\\
  f(b,y) &=& n(b-1)+y,\ (b,y)\in B_G\times B.
\end{eqnarray*}
In what follows, we will prove that $f$ is a near $\alpha$-labeling of $G\bar\otimes_h F$. Clearly, the range of $f$ is $[0,qn]$. To see that $f$ is an injective function we have to check three possibilities. Suppose first that there exist two pairs $(a,x)\in A_G\times A$ and $(b,y)\in B_G\times B$ such that $f(a,x)=f(b,y)$. Then, $n(b-1-a)=x-y$ and thus, $n$ divides $x-y$. Since $x\in A$ and $y\in B$ this implies that $x=0$ and $y=n$. Hence, we obtain that $b-1-a=-1$, that is, $a=b$, a contradiction. The other two possibilities and the rest of the proof follow as in the proof of Theorem \ref{theo: Snevily_generalize}.
\end{proof}
\subsection{Application to bigraceful labeling}
In turns out that Theorem \ref{theo_near_alpha labeling_generalize} also holds when instead of considering graphs with near $\alpha$-labelings, we consider graphs that admit a bigraceful labeling. The fact that we are now considering a labeling with a pair of injective functions (one for each stable set) allows to simplify the labeling of the vertices of the product.

\begin{theorem}\label{theo_bigraceful_generalize}
Let $G$ be a bipartite graph that admits a bigraceful labeling. Let $\Gamma$ be a family of bipartite graphs such that for every $F\in \Gamma$, $|E(F)|=n$ and there exists a bigraceful labeling $f_F$ with $f_F(V(F))=A\cup B$, where $A,B\subset [0,n_1]$ are the stable sets defined by $f_F$ and they do not depend on $F$. Consider any function $h: E(G)\rightarrow \Gamma$. Then, the graph $G\bar\otimes_h \Gamma$ also has a bigraceful labeling.
\end{theorem}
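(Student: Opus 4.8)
The plan is to follow the proof of Theorem \ref{theo: Snevily_generalize} almost verbatim, while exploiting the extra freedom of a bigraceful labeling: such a labeling consists of \emph{two} independent injections, one for each stable set, so the two parts of $G\bar\otimes_h\Gamma$ can be labeled by the \emph{same} formula, with no need for the shift ``$b-1$'' that was forced in the $\alpha$-case to keep the images of the two stable sets apart. Write $q=|E(G)|$ and, as before, rename every vertex of $G$ and of each $F\in\Gamma$ by its label, so that $A_G,B_G\subseteq[0,q-1]$, $A,B\subseteq[0,n-1]$, the differences $b-a$ over the edges $ab\in E(G)$ run bijectively over $[0,q-1]$, and the differences $y-x$ over the edges $xy\in E(F)$ run bijectively over $[0,n-1]$ for every $F\in\Gamma$. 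Since $|E(G\bar\otimes_h\Gamma)|=\sum_{ab\in E(G)}|E(h(ab))|=qn$, the target range is $[0,qn-1]$.

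Concretely, I would define $f$ on the ordered partition $(A_G\times A,\ B_G\times B)$ by
\begin{eqnarray*}
f(a,x) &=& na+x,\qquad (a,x)\in A_G\times A,\\
f(b,y) &=& nb+y,\qquad (b,y)\in B_G\times B,
\end{eqnarray*}
and claim $f$ is a bigraceful labeling. That the range lies in $[0,qn-1]$ is immediate, since $na+x\le n(q-1)+(n-1)=qn-1$ and likewise $nb+y\le qn-1$. Injectivity of $f$ on each part is the usual base-$n$ argument: $na+x=na'+x'$ with $x,x'\in[0,n-1]$ forces $x\equiv x'\pmod n$, hence $x=x'$ and then $a=a'$, and symmetrically on $B_G\times B$. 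For the induced edge labeling, an edge of $G\bar\otimes_h\Gamma$ from $(a,x)\in A_G\times A$ to $(b,y)\in B_G\times B$ receives the label $f(b,y)-f(a,x)=n(b-a)+(y-x)$, a base-$n$ digit string whose high part $b-a\in[0,q-1]$ is the $G$-edge label of $ab$ and whose low part $y-x\in[0,n-1]$ is the $h(ab)$-edge label of $xy$.

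If two edges $(a,x)(b,y)$ and $(a',x')(b',y')$ receive the same label, then reducing $n(b-a)+(y-x)=n(b'-a')+(y'-x')$ modulo $n$ (using $y-x,y'-x'\in[0,n-1]$) yields $y-x=y'-x'$ and hence $b-a=b'-a'$; injectivity of the $G$-edge labeling then gives $ab=a'b'$ as an edge, injectivity of the two vertex-injections of $G$ gives $a=a'$ and $b=b'$, so $h(ab)=h(a'b')$, and finally injectivity of the corresponding $F$-edge labeling together with $y-x=y'-x'$ gives $xy=x'y'$, whence $x=x'$, $y=y'$. Thus the $qn$ edge labels are pairwise distinct and therefore occupy all of $[0,qn-1]$, so $f$ is bigraceful with respect to $(A_G\times A,\ B_G\times B)$.

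I expect the only point requiring genuine care to be the implication ``$ab=a'b'\Rightarrow a=a'$ and $b=b'$'': a bigraceful labeling need not produce disjoint images in $[0,n-1]$, so a vertex of $A_G$ and a vertex of $B_G$ may share a numerical label, and one must argue with vertices and the \emph{ordered} partition rather than with labels — an edge $ab$ has a unique endpoint in $A_G$ and a unique endpoint in $B_G$, recovered respectively by the two injections of $G$, and the same remark is applied inside $h(ab)$. Once this is set up correctly, the rest is exactly the base-$n$ bookkeeping already used in the proof of Theorem \ref{theo: Snevily_generalize}.
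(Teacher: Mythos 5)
Your proposal is correct and uses exactly the labeling the paper uses: the paper defines $f(a,x)=na+x$ uniformly on both parts of the ordered partition $(A_G\times A,\,B_G\times B)$ and dismisses the verification as ``an easy check,'' which is precisely the base-$n$ bookkeeping you carry out (including the observation that the shift $b-1$ from the $\alpha$-labeling case is no longer needed). Your closing remark about recovering $a,a'$ and $b,b'$ from the ordered partition rather than from the numerical labels is a legitimate point of care that the paper glosses over, but it does not change the argument.
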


\begin{proof}
Let $q=|E(G)|$. Assume that every vertex of $G$ and every vertex of $F\in \Gamma$ takes the name of its label assigned by a bigraceful labeling of $G$ and $F$, respectively. Suppose that every edge $xy\in E(F)$, is of the form $x\in A$, $y\in B$ and $x<y$. Consider $f:(A_G\times A, B_G\times B)\rightarrow [0, qn-1]$, defined by:
\begin{eqnarray*}
  f(a,x) &=& na+x, \ (a,x)\in (A_G\times A,B_G\times B)
\end{eqnarray*}
An easy check shows that $f$ is a bigraceful labeling of $G\bar\otimes_h \Gamma$.
\end{proof}

\end{document}